\theoremstyle{plain}
\newtheorem{theorem}{Theorem}[section]
\newtheorem{lemma}[theorem]{Lemma}
\theoremstyle{definition}
\newtheorem{example}[theorem]{Example}
\theoremstyle{remark}
\numberwithin{equation}{section}
\newcommand{\eps}{\varepsilon}
\newcommand{\dx}{\mathrm{d} x}
\newcommand{\N}{\mathbb{N}}
\newcommand{\R}{\mathbb{R}}
\newcommand{\ap}{\alpha_p}
\newcommand{\F}{\mathrm{F}}
\newcommand{\idot}{\!\cdot\!}
\title[]{\boldmath Kolmogorov--Riesz compactness in \\ asymptotic $L_p$ spaces}
\author[]{Nuno J. Alves}
\address[N. J. Alves]{
      University of Vienna, Faculty of Mathematics, Oskar-Morgenstern-Platz 1, 1090 Vienna, Austria.}
\email{nuno.januario.alves@univie.ac.at}
\begin{document}

\begin{abstract}
We extend the classical Kolmogorov--Riesz compactness theorem to the setting of asymptotic $L_p$ spaces on $\R^n$. These are nonlocally convex $F$-spaces that contain the standard $L_p$ spaces as dense subspaces and include all measurable functions supported on sets of finite measure. In contrast with the classical $L_p$ setting, an additional almost equiboundedness condition is needed, and we prove that together with the natural tail and translation conditions it characterizes relative compactness. We conclude with illustrative examples.
\end{abstract}

\subjclass[2020]{46A16, 46B50, 46E30, 28A20}
\keywords{Kolmogorov--Riesz compactness theorem, asymptotic~$L_p$ spaces, precompactness criterion, nonlocally convex spaces, $\F$-norm, almost equiboundedness}
\maketitle
\thispagestyle{empty}

\section{Introduction}

The classical Kolmogorov--Riesz compactness theorem provides necessary and sufficient conditions for a family of functions in $L^p(\R^n)$, with $1 \leq p < \infty$, to be totally bounded, and hence relatively compact due to completeness. This result plays a key role in establishing existence results for partial differential equations.
\par
We recall the statement of the theorem using the standard norm $\| \idot \|_p$ on $L^p(\R^n)$:

\begin{theorem}[Kolmogorov--Riesz compactness theorem in $L^p(\R^n)$ {\cite{hanche2010kolmogorov, hanche2019improvement}}] \label{thm_KR}
A subset $\mathcal{F} \subseteq L^p(\R^n)$, with $1 \leq p < \infty$, is totally bounded with respect to $\| \idot \|_p$ if and only if the following two conditions hold:
\begin{enumerate}[(i)]
\item For each $\varepsilon > 0$, there exists $R > 0$ such that
\[
\int_{|x| > R} |f|^p \, \dx < \varepsilon^p
\]
for all $f \in \mathcal{F}$.
\smallskip
\item For each $\varepsilon > 0$, there exists $r > 0$ such that
\[
\int_{\R^n} | \tau_y f - f |^p \, \dx < \varepsilon^p
\]
for every $y \in \R^n$ with $|y| < r$, and all $f \in \mathcal{F}$, where $\tau_y f(x) = f(x+y)$.
\end{enumerate}
\end{theorem}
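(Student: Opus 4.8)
The plan is to establish the two implications separately. For necessity, I would start from the two facts that hold for an individual $g\in L^p(\mathbb{R}^n)$: that $\int_{|x|>R}|g|^p\,\dx\to 0$ as $R\to\infty$ (dominated convergence), and that $\|\tau_y g-g\|_p\to 0$ as $y\to 0$ (strong continuity of translations, obtained by approximating $g$ in $L^p$ by a function in $C_c(\mathbb{R}^n)$). Given $\varepsilon>0$, cover the totally bounded set $\mathcal{F}$ by finitely many $\|\cdot\|_p$-balls of radius $\varepsilon/3$ centred at $g_1,\dots,g_N$, then take $R:=\max_j R_j$ and $r:=\min_j r_j$ so that both displays hold with $\varepsilon/3$ for every centre simultaneously; the triangle inequality, together with the isometry $\|\tau_y g\|_p=\|g\|_p$, transfers (i) and (ii) from the centres to every $f\in\mathcal{F}$, with $\varepsilon$ in place of $\varepsilon/3$.

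For sufficiency — the substantive direction — the goal is to produce, for each $\varepsilon>0$, a finite $\varepsilon$-net for $\mathcal{F}$, and the strategy is to approximate every $f\in\mathcal{F}$ uniformly, within $\varepsilon$, by an element of a fixed bounded subset of a finite-dimensional space. First, using (i), fix $R$ and a cube $Q\supseteq\{|x|\le R\}$ centred at the origin so that $\|f-f\mathbf{1}_Q\|_p<\varepsilon/3$ for all $f$. Next, partition $Q$ into congruent subcubes $Q_1,\dots,Q_M$ of side $\rho$ and set $\tilde f:=\sum_{i=1}^M a_i(f)\,\mathbf{1}_{Q_i}$ with $a_i(f):=\rho^{-n}\int_{Q_i}f\,\dx$; a Jensen-plus-Fubini estimate yields the \emph{uniform} bound $\|f\mathbf{1}_Q-\tilde f\|_p\le 2^{n/p}\sup_{|h|\le\rho\sqrt{n}}\|\tau_h f-f\|_p$, so by (ii) one can choose $\rho$ small, depending only on $\varepsilon$, making the right-hand side $\le\varepsilon/3$ for every $f$; hence $\|f-\tilde f\|_p<2\varepsilon/3$. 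Finally, each $\tilde f$ lies in $V:=\operatorname{span}\{\mathbf{1}_{Q_1},\dots,\mathbf{1}_{Q_M}\}$ and $\|\tilde f\|_p^p=\rho^n\sum_i|a_i(f)|^p\le\sum_i\int_{Q_i}|f|^p\,\dx\le\|f\|_p^p$; since $V$ is finite-dimensional, the bounded set $\{\tilde f:f\in\mathcal{F}\}$ is totally bounded (Heine--Borel), so it has a finite $\varepsilon/3$-net, which is then a finite $\varepsilon$-net for $\mathcal{F}$ by the triangle inequality.

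The previous paragraph uses $\sup_{f\in\mathcal{F}}\|f\|_p<\infty$, which I would derive from (i) and (ii) themselves: take $R$ from (i) and $r$ from (ii) with $\varepsilon=1$; the subadditivity $\|\tau_{y_1+y_2}f-f\|_p\le\|\tau_{y_1}f-f\|_p+\|\tau_{y_2}f-f\|_p$ upgrades (ii) to $\|\tau_z f-f\|_p\le 1+|z|/r$ for every $z$ (write $z=ky$ with $|y|<r$). Writing $B_R:=\{|x|\le R\}$ and choosing any $z$ with $|z|=2R+1$, the translated ball $B_R+z$ lies inside $\{|x|>R\}$, so $\|\mathbf{1}_{B_R}\,\tau_z f\|_p<1$; then $\|\mathbf{1}_{B_R}f\|_p\le\|\mathbf{1}_{B_R}(f-\tau_z f)\|_p+\|\mathbf{1}_{B_R}\,\tau_z f\|_p<\|f-\tau_z f\|_p+1$, and adding the tail bound from (i) bounds $\|f\|_p$ by a constant independent of $f$.

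The crux is the uniform averaging estimate: one must control $\|f\mathbf{1}_Q-\tilde f\|_p$ purely by the translation modulus of $f$, uniformly over $\mathcal{F}$ and with a constant independent of the number $M$ of subcubes. The delicate point in the computation is that after bounding $|f(x)-a_i(f)|^p\le\rho^{-n}\int_{Q_i}|f(x)-f(y)|^p\,dy$ by Jensen and substituting $y=x+h$, one should sum over $i$ \emph{before} enlarging each inner domain $Q_i\cap(Q_i-h)$ to $\mathbb{R}^n$: these sets are pairwise disjoint, so summing first is free and gives exactly $\int_{\mathbb{R}^n}|\tau_h f-f|^p\,\dx$, whereas estimating them separately would introduce a fatal factor $M$. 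Everything else is bookkeeping with the triangle inequality and finite-dimensionality of $V$.
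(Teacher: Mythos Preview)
The paper does not contain its own proof of this statement: Theorem~\ref{thm_KR} is quoted from the literature (with citations to \cite{hanche2010kolmogorov, hanche2019improvement}) and then \emph{used} as a black box in Section~\ref{section_proof_sufficiency} to prove the new result, Theorem~\ref{main_thm}. So there is nothing in the paper to compare your argument against.

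That said, your proposal is correct and is essentially the classical proof found in the cited references. The necessity direction via a finite net plus the triangle inequality is standard. For sufficiency, the averaging-over-subcubes scheme together with the Jensen/Fubini bound $\|f\mathbf{1}_Q-\tilde f\|_p\le 2^{n/p}\sup_{|h|\le\rho\sqrt{n}}\|\tau_h f-f\|_p$ is exactly the mechanism in \cite{hanche2010kolmogorov}, and your remark about summing over $i$ \emph{before} enlarging the inner domains (exploiting the disjointness of the $Q_i\cap(Q_i-h)$) correctly identifies the step where a careless estimate would introduce a spurious factor $M$. Your derivation of uniform $L^p$-boundedness from (i) and (ii) alone, via the subadditivity $\|\tau_{z}f-f\|_p\le k$ for $z$ written as a sum of $k$ short translations and then translating the ball $B_R$ outside $\{|x|\le R\}$, is precisely the argument of \cite{hanche2019improvement} that renders the old boundedness hypothesis redundant. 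No gaps.
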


The classical version of this theorem included a third condition: boundedness in $L^p(\R^n)$ of the family $\mathcal{F}$. However, this assumption has been shown to be redundant, as it follows from conditions (i) and (ii); see \cite{hanche2019improvement}. A related improvement in the setting of bounded metric measure spaces can be found in \cite{gorka2017light}. For a historical account and a proof of Theorem~\ref{thm_KR} based on a general compactness lemma in metric spaces, we refer to \cite{hanche2010kolmogorov}.

\par

In this note, we establish an analogous compactness criterion in the nonlocally convex setting of \textit{asymptotic $L_p$ spaces} on $\R^n$; see Theorem~\ref{main_thm}. These spaces, denoted by $\Lambda^p(\R^n)$, were introduced in \cite{alves2025F} and consist of real-valued measurable functions that are \textit{almost in $L_p$}, in the sense that they belong to $L^p(\R^n)$ outside sets of arbitrarily small measure. The topology is given by \textit{asymptotic $L_p$-convergence} (abbreviated as \textit{$\ap$-convergence}), which endows the space with a complete metric structure. More precisely,
\[
\Lambda^p(\R^n) = \Big\{ f:\R^n \to \R \ \text{measurable} \ : \ \forall \delta> 0 \ \exists E_\delta \text{ with } |E_\delta| < \delta \text{ and } f \chi_{E_\delta^c} \in L^p(\R^n) \Big\}
\]
where $\chi_E$ denotes the characteristic function of the set $E$, and $E^c = \R^n \setminus E$. The topology is generated by the $\F$-norm
\[
\|f\|_{\ap} := \|\! \min(|f|,1) \|_p.
\]
This $\F$-norm generates the topology of $\ap$-convergence and makes $\Lambda^p(\R^n)$ into a complete metric space; see \cite{alves2025F} for details. Section~\ref{section_asymptotic_spaces} recalls the main properties of these spaces.

\par

Given the apparent similarity between $\Lambda^p(\R^n)$ and $L^p(\R^n)$, a natural first step toward establishing a compactness criterion for $\Lambda^p(\R^n)$ is to try adapting the proof of Theorem~\ref{thm_KR}. However, the lack of homogeneity of the $\F$-norm prevents a direct adaptation and leads to the need for an additional condition: \textit{almost equiboundedness}, which controls the measure of the regions where functions take large values. We show that this condition is also necessary; see condition (iii) in Theorem~\ref{main_thm} and Lemma~\ref{lemma3}.

\par

%Furthermore, if $E \subseteq \R^n$ has finite measure, then $\Lambda^p(E)$ coincides with the space of all real-valued measurable functions on $E$, equipped with the topology of convergence in measure; see \cite[Theorem~1.1]{alves2025F}. As a consequence of our main theorem, we recover a classical compactness result for families of measurable functions on bounded sets; see Corollary~\ref{corollary}. This illustrates how our theorem connects compactness in $L^p(\R^n)$ with compactness in the space of measurable functions.

\par

To the best of the author’s knowledge, Theorem~\ref{main_thm} appears to be the first Kolmogorov--Riesz type compactness criterion on an unbounded domain for a nonlocally convex $\F$-space whose topology is generated by a nonhomogeneous $\F$-norm. An $\F$-space is a completely metrizable topological vector space with a translation-invariant metric; standard examples include, for $0 < p < 1$, the spaces $L_p$, $\ell_p$, and the Hardy spaces $H^p$ of analytic functions \cite{kalton1984space}.

%\par
%Over the past two decades, extensions of Theorem~\ref{thm_KR} have been developed in a wide range of functional settings. These include locally compact Abelian groups \cite{georgescu2004riesz}; variable exponent Lebesgue spaces \cite{rafeiro2009kolmogorov, gorka2015almost, bandaliyev2017compactness}; variable exponent Morrey spaces \cite{bandaliyev2021relatively}; grand Lebesgue and grand variable exponent spaces \cite{rafeiro2015compactness, gorka2016arzela, gorka2017corrigendum}; and, more generally, quasi-Banach function spaces \cite{guo2020relatively}.

%\par
%Over the past two decades, extensions of Theorem~\ref{thm_KR} have been developed in a wide range of functional settings. These include locally compact Abelian groups \cite{georgescu2004riesz}; Banach function spaces \cite{gorka2016arzela, gorka2017corrigendum, gorka2019banach}; variable exponent Lebesgue spaces \cite{rafeiro2009kolmogorov, gorka2015almost, bandaliyev2017compactness, bandaliyev2018relatively}; variable exponent Morrey spaces \cite{bandaliyev2021relatively}; grand Lebesgue and grand variable exponent spaces \cite{rafeiro2015compactness, gorka2016arzela, gorka2017corrigendum}; quasi-Banach function spaces \cite{caetano2016compactness, guo2020relatively}; and spaces of variable integrability and summability \cite{dymek2023compactness}.

\par 

Over the past two decades, extensions of Theorem~\ref{thm_KR} have been developed in a wide range of functional settings. These include locally compact abelian groups \cite{georgescu2004riesz}; Banach function spaces, including grand variable exponent Lebesgue spaces \cite{gorka2016arzela, gorka2017corrigendum, gorka2019banach}; variable exponent Lebesgue spaces \cite{rafeiro2009kolmogorov, gorka2015almost, bandaliyev2017compactness, bandaliyev2018relatively}; variable exponent Morrey spaces \cite{bandaliyev2021relatively}; grand Lebesgue spaces \cite{rafeiro2015compactness}; quasi-Banach function spaces \cite{caetano2016compactness, guo2020relatively}; and spaces of variable integrability and summability \cite{dymek2023compactness}.

\par 

The manuscript is organized as follows. In Section~\ref{section_asymptotic_spaces}, we recall the main properties of the asymptotic $L_p$ spaces. Section~\ref{section_main_result} contains the statement of the main result, Theorem~\ref{main_thm}, which is then proved in Sections~\ref{section_proof_necessity} and \ref{section_proof_sufficiency}. Finally, in Section~\ref{section_examples}, we present examples that illustrate our result.

\section{Asymptotic \texorpdfstring{$L_p$}{Lp} spaces} \label{section_asymptotic_spaces}

In this section, we give an overview of the main properties of the asymptotic $L_p$ spaces $\Lambda^p(\R^n)$.  
\par
This line of research was initiated in \cite{alves2024mode} with the introduction of asymptotic $L_p$-convergence, motivated by a question related to convergence in relative entropy. A sequence of measurable functions $\{f_k\}_{k\in \N}$ is said to $\ap$-converge to a function $f$ if there exists a sequence of measurable sets $\{B_k\}_{k\in\N}$ such that  
\[
\int_{B_k} |f_k - f|^p \, \dx \to 0, \quad |B_k^c| \to 0, \quad \text{as } k \to \infty.
\]  
Basic properties of this mode of convergence were studied in \cite{alves2024mode}, and it was shown in \cite{alves2024relation} that, on finite measure spaces, it is equivalent to convergence in measure.  
\par
It is easy to see that $\ap$-convergence is generated by the $\F$-norm $\| \idot \|_{\ap}$ defined above. An $\F$-norm is a functional similar to a norm, except that homogeneity is replaced by the following two conditions:  
\[
\|\lambda f \|_{\ap} \leq \|f \|_{\ap} \quad \text{for all } |\lambda| \leq 1 \text{ and all } f \in \Lambda^p(\R^n),
\]
and  
\[
\lim_{\lambda \to 0} \|\lambda f \|_{\ap} = 0 \quad \text{for all } f \in \Lambda^p(\R^n);
\]
see \cite[Proposition~A.2]{alves2025F}.  
\par
Interestingly, the lack of homogeneity has deep consequences: the space $\Lambda^p(\R^n)$ is neither locally bounded nor locally convex \cite[Propositions~7.1 and 7.2]{alves2025F}, and its dual consists only of the zero functional \cite[Proposition~7.3]{alves2025F}. This highlights how fundamentally different $\Lambda^p(\R^n)$ is from the standard $L^p(\R^n)$. Nevertheless, many classical results have analogs in this setting. In~\cite{alves2025F}, versions of the dominated convergence and Vitali convergence theorems were established for $\Lambda^p(\R^n)$.
\par
Moreover, it follows from the definitions that if $f \in \Lambda^p(\R^n)$, then there exists a sequence $\{f_k\}_{k \in \N} \subseteq L^p(\R^n)$ that $\ap$-converges to $f$. Hence, $L^p(\R^n)$ is dense in $\Lambda^p(\R^n)$, and since $L^p(\R^n)$ is separable, so is $\Lambda^p(\R^n)$.
\par
Furthermore, when the underlying measure space has finite measure, for instance a bounded set $E \subseteq \R^n$, then $\Lambda^p(E)$ coincides with the space of all real-valued measurable functions on $E$; see \cite[Theorem~1.1]{alves2025F}. Thus, $\Lambda^p(\R^n)$ extends the space of measurable functions, equipped with the topology of convergence in measure, to the unbounded domain $\R^n$. In this sense, the asymptotic $L_p$ spaces retain features from both the standard $L^p(\R^n)$ and the $\F$-space of measurable functions. 

\section{Main result} \label{section_main_result}

In this section, we state the main result of the paper --- a characterization of the relatively compact subsets of $\Lambda^p(\R^n)$. Since $\Lambda^p(\R^n)$ is complete, relative compactness is equivalent to total boundedness.

\begin{theorem}[Kolmogorov--Riesz compactness theorem in $\Lambda^p(\R^n)$] \label{main_thm} 
A subset $\mathcal{F} \subseteq \Lambda^p(\R^n)$, $1 \leq p < \infty$, is totally bounded with respect to the $\F$-norm $\| \idot \|_{\ap}$ if and only if the following three conditions hold:
\begin{enumerate}[(i)]
\item For each $\varepsilon > 0$, there exists $R > 0$ such that 
\[
\int_{|x|>R} \min(|f|,1)^p \, \dx < \varepsilon^p
\]
for all $f \in \mathcal{F}$.
\smallskip
\item For each $\varepsilon > 0$, there exists $r > 0$ such that 
\[
\int_{\R^n} \min(|\tau_y f - f|,1)^p \, \dx < \varepsilon^p
\]
for every $y \in \R^n$ with $|y| < r$ and all $f \in \mathcal{F}$, where $\tau_y f(x) = f(x+y)$.
\smallskip
\item For each $\varepsilon > 0$, there exists $M > 0$ such that 
\[
\big| \{ |f| > M \} \big| < \varepsilon
\]
for all $f \in \mathcal{F}$.
\end{enumerate}
\end{theorem}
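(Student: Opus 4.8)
The plan is to prove both implications, the sufficiency direction being the substantial one; throughout I write $d(f,g)=\|f-g\|_{\ap}$ for the metric, and I will use freely that $\min(|a+b|,1)\le\min(|a|,1)+\min(|b|,1)$, that $\|\cdot\|_{\ap}$ is subadditive and satisfies $\|\cdot\|_{\ap}\le\|\cdot\|_p$, and that $\|\tau_u\phi\|_{\ap}=\|\phi\|_{\ap}$ by translation invariance of Lebesgue measure.

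\emph{Necessity.} The key observation is that every single $f\in\Lambda^p(\R^n)$ already satisfies (i)--(iii): since $\min(|f|,1)^p\in L^1(\R^n)$, property (i) is the statement that the tail of a convergent integral vanishes; property (iii) follows from $|\{|f|\ge 1\}|\le\|f\|_{\ap}^p<\infty$ together with continuity of measure from above (using that $f$ is finite a.e.); and property (ii) follows from density of $L^p(\R^n)$ in $\Lambda^p(\R^n)$, the classical continuity of translation in $L^p$, and $\|\cdot\|_{\ap}\le\|\cdot\|_p$. If $\mathcal F$ is totally bounded, cover it by finitely many $\F$-norm balls of small radius $\delta$ centred at $f_1,\dots,f_N$, apply (i)--(iii) to each $f_j$, and transfer the conclusions to an arbitrary $f\in\mathcal F$ with $d(f,f_j)<\delta$ by the triangle inequality and the subadditivity of $\min(|\cdot|,1)$; for (iii) one uses $\{|f|>M+1\}\subseteq\{|f_j|>M\}\cup\{|f-f_j|\ge 1\}$ together with the Chebyshev bound $|\{|f-f_j|\ge 1\}|\le\|f-f_j\|_{\ap}^p$. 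These transfers are routine once the single-function statements are in place.

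\emph{Sufficiency.} Fix $\varepsilon>0$; I will produce a finite $\varepsilon$-net. First, using (iii), choose $M\ge 1$ with $|\{|f|>M\}|<(\varepsilon/2)^p$ for all $f\in\mathcal F$, and replace each $f$ by its truncation $g_f:=\max(-M,\min(M,f))$; then $\|f-g_f\|_{\ap}^p\le|\{|f|>M\}|<(\varepsilon/2)^p$, while $\|g_f\|_\infty\le M$ and, since truncation is $1$-Lipschitz and $|g_f|\le|f|$, the family $\{g_f\}$ still satisfies (i) and (ii). Set once and for all the amplification constant $A:=2^{1+n/p}M\ge 1$. Next, using (i), choose $R$ with $\|g_f\chi_{\{|x|>R\}}\|_{\ap}<b$ for all $f$, where $b$ is a small tolerance to be fixed below, and put $h_f:=g_f\chi_{\{|x|\le R\}}$; then $\|g_f-h_f\|_{\ap}<b$, $\|h_f\|_\infty\le M$, and $\operatorname{supp}h_f$ lies in a fixed cube $Q\supseteq B_R$. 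Using (ii), choose $r>0$ with $\sup_{|u|<r}\|\tau_u g_f-g_f\|_{\ap}<c$ for all $f$, with $c$ a small tolerance to be fixed below; then pick a cube side $\ell$ with $\ell\sqrt n<r$, partition $Q$ into $K$ subcubes $Q_1,\dots,Q_K$ of side $\ell$, and define the averaging operator $P h_f:=\sum_{k=1}^K\big(\tfrac{1}{|Q_k|}\int_{Q_k}h_f\big)\chi_{Q_k}$.

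The heart of the argument is the estimate of $\|Ph_f-h_f\|_{\ap}$. Since $\min(t,1)\le t$, we have $\|Ph_f-h_f\|_{\ap}\le\|Ph_f-h_f\|_p$, and the classical averaging computation---Jensen's inequality on each $Q_k$, the substitution $z=x+u$, and then reassembling $\sum_k\int_{Q_k}=\int_Q$ \emph{before} invoking the translation bound---gives $\|Ph_f-h_f\|_p^p\le 2^n\sup_{|u|\le\ell\sqrt n}\|\tau_u h_f-h_f\|_p^p$, with no dependence on the number of cubes $K$. Because $|\tau_u h_f-h_f|\le 2M$ and $M\ge 1$, one has $\|\tau_u h_f-h_f\|_p^p\le(2M)^p\|\tau_u h_f-h_f\|_{\ap}^p$, while subadditivity and translation invariance give $\|\tau_u h_f-h_f\|_{\ap}\le 2\|g_f-h_f\|_{\ap}+\|\tau_u g_f-g_f\|_{\ap}<2b+c$ for $|u|\le\ell\sqrt n$. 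Hence $\|Ph_f-h_f\|_{\ap}\le A(2b+c)$. Since $A$ was fixed \emph{before} $b$ and $c$, we may now choose $b$ and $c$ small enough (depending only on $\varepsilon$ and $A$) that $\|f-Ph_f\|_{\ap}\le\|f-g_f\|_{\ap}+\|g_f-h_f\|_{\ap}+\|Ph_f-h_f\|_{\ap}<\varepsilon$ for every $f\in\mathcal F$; this choice then determines $R$, $r$, $\ell$ and $K$, and there is no circularity. Finally, $Ph_f=\sum_k c_k^f\chi_{Q_k}$ with $|c_k^f|\le\|h_f\|_\infty\le M$, so $\{Ph_f:f\in\mathcal F\}$ lies in the image of the compact box $[-M,M]^K\subseteq\R^K$ under the map $(c_k)\mapsto\sum_k c_k\chi_{Q_k}$, which is continuous since its target is a finite-dimensional subspace; thus this image set is compact, in particular totally bounded, and admits a finite $\varepsilon$-net $v_1,\dots,v_N$. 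Every $f\in\mathcal F$ then lies within $2\varepsilon$ of some $v_j$, and since $\varepsilon>0$ was arbitrary, $\mathcal F$ is totally bounded.

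\emph{Main obstacle.} The difficulty is concentrated in the sufficiency direction and comes entirely from the non-homogeneity of $\|\cdot\|_{\ap}$: hypothesis (ii) controls only the truncated quantity $\min(|\cdot|,1)$, whereas the averaging operator interacts naturally with the genuine $L^p$ norm, and converting between the two costs a factor proportional to the sup-norm of the functions involved. Condition (iii) is exactly what lets us truncate the range first so that this factor becomes a fixed constant $A$; the complementary subtlety is to carry out the averaging estimate so that the bound is independent of the number of cubes $K$, since $K$ itself grows without bound as the tolerances in (i) and (ii) are tightened. Once these two points are respected, the rest is the standard Kolmogorov--Riesz scheme adapted to the $\F$-norm.
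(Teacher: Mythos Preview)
Your proof is correct. The necessity direction matches the paper's almost exactly; in particular, your inclusion $\{|f|>M+1\}\subseteq\{|f_j|>M\}\cup\{|f-f_j|\ge 1\}$ is precisely the heart of the paper's Lemma~\ref{lemma3}.

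For sufficiency, the key idea is the same as the paper's: use condition~(iii) to truncate so that the amplification factor between $\|\cdot\|_{\ap}$ and $\|\cdot\|_p$ becomes a fixed constant, then reduce to an $L^p$ compactness argument. The difference is in how that $L^p$ argument is executed. The paper observes that the truncated family $\mathcal{F}_M$ satisfies the two hypotheses of the classical Kolmogorov--Riesz theorem (Theorem~\ref{thm_KR}) and simply invokes it as a black box. You instead unfold the classical proof: restrict to a large cube via (i), partition into small subcubes, apply the averaging operator $P$, bound $\|Ph_f-h_f\|_p$ by the modulus of $L^p$-continuity, and conclude via finite-dimensional compactness of $[-M,M]^K$. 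Your careful tracking of the order of choices ($M$ and $A$ before the tolerances $b,c$, which in turn determine $R,r,\ell,K$) and the observation that the averaging bound does not depend on $K$ are exactly what is needed to make this self-contained version work. The paper's route is shorter and more modular; yours is independent of Theorem~\ref{thm_KR} and makes explicit where the non-homogeneity bites.
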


We note that the conditions in Theorem~\ref{main_thm} are sharp, in the sense that none of them can be deduced from the others. The first three examples in Section~\ref{section_examples} illustrate this. 
\par

Moreover, condition~(iii) is equivalent to almost equiboundedness, that is: for each $\varepsilon > 0$, there exists $M > 0$ such that for every $f \in \mathcal{F}$ there is a measurable set $S_f \subseteq \R^n$ with $|S_f| < \varepsilon$ and
\[
|f| \leq M \quad \text{on } S_f^c.
\]
Indeed, if condition~(iii) holds, it suffices to take
\[
S_f = \{|f| > M\}.
\]
Conversely, if the displayed condition holds, then
\[
\{|f| > M\} \subseteq S_f,
\]
and therefore
\[
\big| \{|f| > M\} \big| \le |S_f| < \varepsilon,
\]
which is exactly condition~(iii).

\section{Proof of Theorem \ref{main_thm}: Necessity} \label{section_proof_necessity}
In this section, we prove that a totally bounded family in $\Lambda^p(\R^n)$ satisfies the three conditions of Theorem~\ref{main_thm}. The proof is divided into three lemmas, each corresponding to one of the conditions.

\begin{lemma} \label{lemma1}
If a subset $\mathcal{F} \subseteq \Lambda^p(\R^n)$ is totally bounded (with respect to $\| \idot \|_{\ap}$) then for each $\eps>0$, there exists $R>0$ such that 
\begin{equation*} 
\int_{|x|>R} \min(|f|,1)^p \, \dx < \eps^p
\end{equation*} 
for all $f \in \mathcal{F}$.
\end{lemma}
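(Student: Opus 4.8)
The plan is to exploit total boundedness to reduce the tail estimate over all of $\mathcal{F}$ to a finite estimate. Given $\eps > 0$, cover $\mathcal{F}$ by finitely many $\| \cdot \|_{\ap}$-balls of radius $\eps/2^{1/p}$ centered at functions $g_1, \dots, g_N \in \Lambda^p(\R^n)$. For each $g_i$, since $g_i \in \Lambda^p(\R^n)$ we have $\min(|g_i|,1) \in L^p(\R^n)$ (indeed $\| g_i \|_{\ap} < \infty$ by definition of the $\F$-norm), so by the dominated convergence theorem applied to $\min(|g_i|,1)^p \chi_{\{|x|>R\}}$ as $R \to \infty$, there exists $R_i > 0$ with $\int_{|x|>R_i} \min(|g_i|,1)^p \, \dx < (\eps/2^{1/p})^p = \eps^p/2$. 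Set $R := \max_i R_i$, so the same bound holds simultaneously for all $g_i$.

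Next I would transfer this to an arbitrary $f \in \mathcal{F}$. Pick $i$ with $\|f - g_i\|_{\ap} < \eps/2^{1/p}$. The key pointwise inequality I expect to need is that $\min(|f|,1) \leq \min(|f - g_i|,1) + \min(|g_i|,1)$, which holds because $\min(|a+b|,1) \leq \min(|a|,1) + \min(|b|,1)$ (check the two cases according to whether $|a|+|b| \leq 1$ or not). Restricting to $\{|x|>R\}$ and using the inequality $(u+v)^p \leq 2^{p-1}(u^p + v^p)$ — or, to keep constants clean, working with the $L^p$ triangle inequality on the truncations — gives
\[
\Big( \int_{|x|>R} \min(|f|,1)^p \, \dx \Big)^{1/p} \leq \Big( \int_{|x|>R} \min(|f-g_i|,1)^p \, \dx \Big)^{1/p} + \Big( \int_{|x|>R} \min(|g_i|,1)^p \, \dx \Big)^{1/p}.
\]
The first term on the right is at most $\| \min(|f - g_i|,1) \|_p = \| f - g_i \|_{\ap} < \eps/2^{1/p}$, and the second is less than $(\eps^p/2)^{1/p} = \eps/2^{1/p}$; hence the left-hand side is at most $2 \cdot \eps/2^{1/p}$, which is not quite $\eps$. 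To fix this I would simply choose the covering radius to be $\eps/(2 \cdot 2^{1/p})$ (equivalently, split $\eps^p$ more carefully), so that each term is below $\eps/2$ and the sum is below $\eps$; raising to the $p$-th power yields the claim.

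The argument is essentially the standard Kolmogorov--Riesz tail argument, with the only genuine point being that the truncation $\min(\cdot,1)$ behaves subadditively so that the $L^p$ triangle inequality still applies to $\| \cdot \|_{\ap}$ — this is exactly the statement that $\| \cdot \|_{\ap}$ satisfies the triangle inequality, which is part of its being an $\F$-norm and is recorded in \cite{alves2025F}. The main (minor) obstacle is bookkeeping the constants to land exactly at $\eps^p$ rather than a multiple of it, and being careful that finiteness of $\| g_i \|_{\ap}$ for each center, which legitimizes the dominated convergence step, is genuinely guaranteed by membership in $\Lambda^p(\R^n)$; no homogeneity of the $\F$-norm is needed anywhere in this particular lemma.
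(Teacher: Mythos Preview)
Your argument is correct, and in fact cleaner than the paper's. The key simplification is your observation that for any $g\in\Lambda^p(\R^n)$ one has $\min(|g|,1)\in L^p(\R^n)$ outright, so the single-function tail estimate follows immediately from dominated convergence; together with the pointwise subadditivity $\min(|a+b|,1)\le \min(|a|,1)+\min(|b|,1)$ (which is exactly why $\|\cdot\|_{\ap}$ satisfies the triangle inequality), the $L^p$ triangle inequality on $\{|x|>R\}$ finishes the job after the obvious adjustment of constants (radius $\eps/2$ and tail bound $(\eps/2)^p$ suffice).

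The paper instead unpacks the definition of $\Lambda^p$: for each center $f_i$ it extracts an exceptional set $E_i$ of small measure on whose complement $f_i\in L^p$, then for a given $f$ it also removes the set $\{|f-f_i|>1\}$ and applies the $L^p$ triangle inequality to the \emph{untruncated} functions on the remaining good set. This buys nothing extra for Lemma~\ref{lemma1}; it just trades the single subadditivity inequality for explicit set bookkeeping. Your route is shorter and makes transparent that no structure beyond the $\F$-norm triangle inequality and $\|g_i\|_{\ap}<\infty$ is used; the paper's route, on the other hand, rehearses the exceptional-set machinery that becomes genuinely necessary in the later Lemma~\ref{lemma3}.
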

\begin{proof}
Let $\mathcal{F} \subseteq \Lambda^p(\R^n)$ be totally bounded and $\eps > 0$. There exist $f_1, \ldots, f_m \in \Lambda^p(\R^n)$ such that \[\mathcal{F} \subseteq \bigcup_{i=1}^m B_{\ap}(f_i, \eps/2^{1+1/p}).\]
Since $f_i \in \Lambda^p(\R^n)$, there exists a measurable set $E_i$ with $|E_i| < \eps^p/(4m)$ so that $f_i \chi_{E_i^c} \in L^p(\R^n)$. This implies that there is $R_i>0$ such that 
\[\int_{E_i^c \cap \{|x| > R_i \}} |f_i|^p \, \dx < \frac{\eps^p}{2^{p+1}}. \]
Set $E = \bigcup_{i=1}^m E_i$ and $R = \max\{R_i \, : \, i = 1, \ldots, m\} $. Clearly  $|E| < \eps^p/4$. \par 
Let $f \in \mathcal{F}$. Then, for some $f_i$ we have $\|f_i - f \|_{\ap}< \eps/2^{1+1/p}$; in particular
\[\int_{|f-f_i|\leq1} |f_i-f|^p \, \dx < \frac{\eps^p}{2^{p+1}}, \qquad  \big|\{|f_i-f| > 1 \}\big| < \frac{\eps^p}{2^{p+1}} \leq \frac{\eps^p}{4}.\]
Set $G = E \cup \{|f_i-f| > 1 \}$. Then $|G| < \eps^p/2$ and 
\begin{align*}
\left(\int_{G^c \cap \{|x| > R \}} |f|^p  \, \dx \right)^{\frac{1}{p}} & \leq \left(\int_{G^c \cap \{|x| > R \}} |f_i-f|^p  \, \dx \right)^{\frac{1}{p}} + \left( \int_{G^c \cap \{|x| > R \}} |f_i|^p  \, \dx\right)^{\frac{1}{p}} \\
& \leq \left(\int_{|f_i-f| \leq 1} |f_i-f|^p  \, \dx \right)^{\frac{1}{p}} + \left( \int_{E_i^c \cap \{|x| > R_i \}} |f_i|^p  \, \dx\right)^{\frac{1}{p}} \\
& < \frac{\varepsilon}{2^{1/p}}.
\end{align*}
Consequently,
\begin{align*}
\int_{|x|>R} \min(|f|,1)^p \, \dx & = \int_{G \cap \{|x|>R \}} \min(|f|,1)^p \, \dx + \int_{G^c \cap \{|x|>R \}} \min(|f|,1)^p \, \dx \\
& \leq |G| + \int_{G^c \cap \{|x|>R \}} |f|^p \, \dx \\
& < \varepsilon^p
\end{align*}
which finishes the proof.
\end{proof}

\begin{lemma} \label{lemma2}
If a subset $\mathcal{F} \subseteq \Lambda^p(\R^n)$ is totally bounded (with respect to $\| \idot \|_{\ap}$) then for each $\eps>0$, there exists $r>0$ such that 
\begin{equation*}
\int_{\R^n} \min(|\tau_y f-f|,1)^p \, \dx < \eps^p
\end{equation*}
for every $y \in \R^n$ with $|y| < r$ and all $f \in \mathcal{F}$.
\end{lemma}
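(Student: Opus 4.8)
The plan is to mimic the classical Kolmogorov--Riesz argument, substituting for the homogeneity of the norm (which fails here) the two structural facts about the $\F$-norm that do survive. First, $\|\cdot\|_{\ap}$ is \emph{subadditive}: from $\min(a+b,1) \le \min(a,1)+\min(b,1)$ for $a,b\ge 0$ one gets $\min(|g+h|,1)\le \min(|g|,1)+\min(|h|,1)$ pointwise, and the triangle inequality in $L^p$ then yields $\|g+h\|_{\ap}\le\|g\|_{\ap}+\|h\|_{\ap}$. Second, $\|\cdot\|_{\ap}$ is \emph{translation-invariant}, since $\min(|\tau_y g|,1)=\tau_y(\min(|g|,1))$ and Lebesgue measure is translation-invariant; in particular $\tau_y$ maps $\Lambda^p(\R^n)$ into itself and acts there as an isometry. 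I will also use that $\|g\|_{\ap}\le\|g\|_p$ for $g\in L^p(\R^n)$, which is immediate from $\min(|g|,1)\le|g|$, and that $\int_{\R^n}\min(|u|,1)^p\,\dx=\|u\|_{\ap}^p$.

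Given $\eps>0$, I would use total boundedness to choose $f_1,\dots,f_m\in\Lambda^p(\R^n)$ with $\mathcal{F}\subseteq\bigcup_{i=1}^m B_{\ap}(f_i,\eps/4)$. For $f\in\mathcal{F}$, pick $f_i$ with $\|f-f_i\|_{\ap}<\eps/4$ and write $\tau_y f-f=\tau_y(f-f_i)+(\tau_y f_i-f_i)+(f_i-f)$. Subadditivity together with translation-invariance gives
\[
\|\tau_y f-f\|_{\ap}\le\|\tau_y(f-f_i)\|_{\ap}+\|\tau_y f_i-f_i\|_{\ap}+\|f_i-f\|_{\ap}=2\|f-f_i\|_{\ap}+\|\tau_y f_i-f_i\|_{\ap}<\frac{\eps}{2}+\|\tau_y f_i-f_i\|_{\ap}.
\]
Thus the lemma reduces to the finitely many assertions that $\|\tau_y f_i-f_i\|_{\ap}<\eps/2$ for $|y|$ small; taking $r$ to be the smallest of the resulting radii and recalling $\int_{\R^n}\min(|\tau_y f-f|,1)^p\,\dx=\|\tau_y f-f\|_{\ap}^p$ then finishes the proof.

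It remains to prove continuity of translation at the origin for a single $g\in\Lambda^p(\R^n)$. Here I would invoke the defining property of $\Lambda^p(\R^n)$: fix a measurable set $E$, with $|E|$ as small as desired, such that $h:=g\chi_{E^c}\in L^p(\R^n)$. Then $\|g\chi_E\|_{\ap}^p=\int_E\min(|g|,1)^p\,\dx\le|E|$, and likewise $\|\tau_y(g\chi_E)\|_{\ap}^p\le|E|$ by translation-invariance. Writing $\tau_y g-g=(\tau_y h-h)+(\tau_y(g\chi_E)-g\chi_E)$ and applying subadditivity,
\[
\|\tau_y g-g\|_{\ap}\le\|\tau_y h-h\|_{\ap}+2|E|^{1/p}\le\|\tau_y h-h\|_p+2|E|^{1/p}.
\]
Choosing $|E|$ so small that $2|E|^{1/p}<\eps/4$ and then using the classical continuity of translations in $L^p(\R^n)$ to make $\|\tau_y h-h\|_p<\eps/4$ for $|y|$ small gives $\|\tau_y g-g\|_{\ap}<\eps/2$, as needed.

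I do not expect a genuine obstacle here: the point is simply that subadditivity of the $\F$-norm and translation-invariance of Lebesgue measure are precisely the ingredients of the classical proof that do not rely on homogeneity, while the reduction to a single function is absorbed by the ``almost $L^p$'' definition of $\Lambda^p(\R^n)$ and the standard $L^p$ fact. The only mild care needed is the bookkeeping of the factors of $2$ produced by applying the triangle inequality to a three-term sum, which I have arranged above so that balls of radius $\eps/4$ suffice.
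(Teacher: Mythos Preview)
Your proof is correct and follows the same architecture as the paper's: cover $\mathcal{F}$ by finitely many $\|\cdot\|_{\ap}$-balls, use subadditivity and translation-invariance of the $\F$-norm to reduce to the single-function statement $\|\tau_y f_i-f_i\|_{\ap}\to 0$, and then reduce that to the classical continuity of translations in $L^p$. The only difference is in this last reduction: the paper approximates each $f_i$ by some $\varphi_i\in C_c^\infty(\R^n)$, invoking a density result from a companion paper, whereas you use the defining decomposition $f_i=f_i\chi_{E^c}+f_i\chi_E$ with $|E|$ small and $f_i\chi_{E^c}\in L^p$; your version is more self-contained and avoids the external citation, but the two arguments are otherwise equivalent.
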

\begin{proof}
Assume that $\mathcal{F} \subseteq \Lambda^p(\R^n)$ is totally bounded and let $\varepsilon > 0$. There exist $f_1, \ldots, f_m \in \Lambda^p(\R^n)$ such that \[\mathcal{F} \subseteq \bigcup_{i=1}^m B_{\ap}(f_i,\eps/3).\]
For each $i = 1, \ldots, m$ there exists $\varphi_i \in C_c^\infty(\R^n)$ with $\|\varphi_i - f_i \|_{\ap}< \eps/9$ (see \cite[Proposition~6.3]{alves2025F}). Hence
\begin{align*}
\|\tau_y f_i - f_i  \|_{\ap} \leq \|\tau_y f_i - \tau_y \varphi_i  \|_{\ap} + \|\tau_y\varphi_i - \varphi_i  \|_{\ap} + \|\varphi_i - f_i  \|_{\ap}.
\end{align*}
The first term on the right-hand side equals the last one, and both are bounded by $\varepsilon/9$. The middle term is bounded by $\|\tau_y\varphi_i - \varphi_i  \|_p$. The smoothness of $\varphi_i$ guarantees the existence of a constant $r_i > 0$ so that $\|\tau_y\varphi_i - \varphi_i  \|_p < \eps/9$ whenever $|y| < r_i$. It follows that for $|y| < r_i$ we have 
\[\|\tau_y f_i - f_i  \|_{\ap} < \eps/3. \]
Set $r = \min\{r_i \, : \, i=1, \ldots, m \}$. Then, for $f \in \mathcal{F}$ and $y \in \R^n$ with $|y| < r$,
\begin{align*}
\|\tau_y f - f  \|_{\ap} \leq \|\tau_y f - \tau_y f_i  \|_{\ap} + \|\tau_y f_i - f_i \|_{\ap} + \|f_i - f  \|_{\ap}
\end{align*}
where $f_i$ is such that $f \in B_{\ap}(f_i,\eps/3)$. Each one of the terms on the right-hand side is bounded from above by $\eps/3$, yielding the desired conclusion.
\end{proof}

\begin{lemma} \label{lemma3}
If a subset $\mathcal{F} \subseteq \Lambda^p(\R^n)$ is totally bounded (with respect to $\| \idot \|_{\ap}$) then for each $\eps>0$, there exists $M>0$ such that 
\begin{equation*} 
\big| \{ |f| > M \} \big| < \eps
\end{equation*}
for all $f \in \mathcal{F}$.
\end{lemma}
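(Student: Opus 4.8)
The plan is to exploit total boundedness to reduce everything to finitely many fixed functions in $\Lambda^p(\R^n)$, and then to transfer control of large super-level sets from those centers to all of $\mathcal{F}$ via the elementary inequality relating $\|\cdot\|_{\ap}$ to the measure of the set where a function exceeds $1$. The key observation is the Chebyshev-type bound
\[
\big|\{|g| > 1\}\big| \;\leq\; \int_{\{|g|>1\}} \min(|g|,1)^p \, \dx \;\leq\; \|g\|_{\ap}^p \qquad \text{for all } g \in \Lambda^p(\R^n),
\]
so that a small $\F$-norm of $g$ forces $\{|g|>1\}$ to have small measure.

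First I would establish the tail-in-measure property of a single function: for every $g \in \Lambda^p(\R^n)$ one has $|\{|g|>M\}| \to 0$ as $M \to \infty$. Indeed, given $\eta>0$, choose (by the definition of $\Lambda^p(\R^n)$) a set $E$ with $|E| < \eta/2$ and $g\chi_{E^c} \in L^p(\R^n)$; then $\{|g|>M\} \subseteq E \cup \{|g\chi_{E^c}| > M\}$, and $|\{|g\chi_{E^c}|>M\}| \leq M^{-p}\|g\chi_{E^c}\|_p^p < \eta/2$ once $M$ is large enough. Next, given $\eps>0$, set $\delta = (\eps/2)^{1/p}$ and use total boundedness to write $\mathcal{F} \subseteq \bigcup_{i=1}^m B_{\ap}(f_i,\delta)$. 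By the previous step, for each $i$ pick $M_i>0$ with $|\{|f_i|>M_i\}| < \eps/2$, and put $M = 1 + \max_{1\leq i\leq m} M_i$.

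Finally, for arbitrary $f \in \mathcal{F}$, choose $i$ with $\|f-f_i\|_{\ap} < \delta$, so that $|\{|f-f_i|>1\}| \leq \|f-f_i\|_{\ap}^p < \delta^p = \eps/2$ by the key inequality. Since $M \geq M_i + 1$, the pointwise implication ``$|f(x)| > M$ and $|f_i(x)| \leq M_i$ imply $|f(x)-f_i(x)| > 1$'' gives the inclusion
\[
\{|f| > M\} \;\subseteq\; \{|f_i| > M_i\} \cup \{|f - f_i| > 1\},
\]
whence $|\{|f|>M\}| < \eps/2 + \eps/2 = \eps$, which is the claim. The only point requiring any care is the first step — the tail-in-measure property of a single $\Lambda^p$ function, together with remembering that the super-level-set inclusion loses an additive $1$ in the height (which is why $M$ must be taken as $1 + \max_i M_i$ rather than $\max_i M_i$); everything else is a routine finite-cover argument, so I do not expect a serious obstacle here.
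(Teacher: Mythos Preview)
Your proof is correct and follows essentially the same approach as the paper: both establish the tail-in-measure property $|\{|g|>M\}|\to 0$ for a single $g\in\Lambda^p(\R^n)$ via the decomposition $\{|g|>M\}\subseteq E\cup\{|g\chi_{E^c}|>M\}$, cover $\mathcal{F}$ by finitely many $\ap$-balls, and then use the inclusion $\{|f|>M_0+1\}\setminus\{|f-f_i|>1\}\subseteq\{|f_i|>M_0\}$ together with the Chebyshev bound $|\{|h|>1\}|\leq\|h\|_{\ap}^p$. The only cosmetic difference is that the paper wraps the same computation in a contradiction argument, whereas you give it directly; your version is in fact slightly cleaner.
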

\begin{proof}
Suppose, towards a contradiction, that there exists $\eps > 0$ such that for every $M>0$ one can find $f_M \in \mathcal{F}$ satisfying
\[\big| \{|f_M| > M \} \big| \geq \eps. \]  Since $\mathcal{F}$ is totally bounded, there are $f_1, \ldots, f_m \in \Lambda^p(\R^n)$ such that 
\[\mathcal{F} \subseteq \bigcup_{i=1}^m B_{\ap}(f_i,(\eps/4)^{1/p}).\]
For each $i=1, \ldots, m$, there exists a measurable set $E_i$ with $|E_i| < \eps/4$ such that $f_i \chi_{E_i^c} \in L^p(\R^n)$. Therefore
\begin{align*}
\big| E_i^c \cap \{|f_i| > M \} \big| & = \big| \{|f_i \chi_{E_i^c}| > M \} \big| \\
& \leq \frac{1}{M^p} \int_{E_i^c} |f_i|^p \, \dx \to 0 \quad \text{as} \ M \to \infty.
\end{align*}
Choose $M_i>0$ so that
 \[\big| E_i^c \cap \{|f_i| > M_i \} \big| < \frac{\eps}{4}. \] 
 We thus have for every $i = 1, \ldots, m$ that
 \begin{align*}
 \big|\{|f_i| > M_i \} \big| \leq | E_i| + \big| E_i^c \cap \{|f_i| > M_i \} \big| < \frac{\eps}{4} + \frac{\eps}{4} = \frac{\eps}{2}.
 \end{align*}
 Set $M_0 = \max\{M_i \, : \, i=1, \ldots, m \}$. There exists $f \in \mathcal{F}$ with $\big|\{|f| > M_0 + 1 \} \big| \geq \eps$. Moreover, $ \|f_j - f \|_{\ap}^p < \eps/4$ for some $j \in \{1, \ldots, m \}$. Define the sets 
 \[G = \{|f| > M_0 + 1 \} \quad \text{and} \quad H = \{ |f_j - f| > 1\} \]
and note that
 \[|H| \leq \|f_j - f \|_{\ap}^p < \frac{\eps}{4}. \]
 Now,
 \begin{align*}
 \eps \leq |G| \leq |H| + |G \cap H^c| < \frac{\eps}{4} + |G \cap H^c|
 \end{align*}
 whence
 \[|G \cap H^c| > \frac{3\eps}{4}. \]
 On $G \cap H^c$ we have 
 \[M_0 + 1 < |f| \leq |f_j - f| + |f_j| \leq 1 + |f_j| \]
 and hence 
 \[G \cap H^c \subseteq \{|f_j| > M_0 \} \]
 which implies 
 \[\frac{3\eps}{4} < |G \cap H^c| \leq \big| \{|f_j| > M_0 \} \big|. \]
However, since $M_0 \geq M_j$,
\[\big| \{|f_j| > M_0 \} \big| \leq \big| \{|f_j| > M_j \}\big| < \frac{\eps}{2} < \frac{3\eps}{4} \] 
which is a contradiction. The result follows.
\end{proof}

\section{Proof of Theorem \ref{main_thm}: Sufficiency} \label{section_proof_sufficiency}
Assume that $\mathcal{F} \subseteq \Lambda^p(\R^n)$ satisfies conditions (i), (ii) and~(iii) of Theorem~\ref{main_thm} and let $\eta > 0$ be given. According to condition~(iii) we can choose $M > 1$ such that 
\[\big| \{|f| > M \} \big| < \left(\frac{\eta}{2}\right)^p \]
for all $f \in \mathcal{F}$. Let $T_M$ be the truncation function defined for $t \in \R$ by $T_M(t) = \max \{ -M, \min\{t,M \} \}$, and define for each $f \in \mathcal{F}$ its truncated version 
\[f_M(x) = T_M(f(x)) = \begin{cases}
M, \quad & \text{if} \ f(x) > M, \\
f(x), \quad & \text{if} \ |f(x)| \leq M,\\
-M, \quad & \text{if} \ f(x) < -M. \\
\end{cases} \]
Since $f_M \in \Lambda^p(\R^n) \cap L^\infty(\R^n)$ it follows that $f_M \in L^p(\R^n)$. \par 
Now, given $f \in \mathcal{F}$ let $G =\{|f| > M \}$. Note that 
\begin{equation*}
|f(x) - f_M(x)| = \begin{cases}
|f(x)| - M, \quad & \text{if} \ x \in G, \\
0, \quad & \text{if} \ x \in G^c.
\end{cases}
\end{equation*}
Therefore
\begin{align*}
\int_{\R^n} \min(|f-f_M|,1)^p \, \dx = \int_G \min(|f-f_M|,1)^p \, \dx \leq |G| < \left(\frac{\eta}{2}\right)^p
\end{align*}
and hence, for every $f \in \mathcal{F}$,
\[\|f - f_M \|_{\ap} < \frac{\eta}{2}. \]
\par 
We proceed to show that the family $\mathcal{F}_M = \{ f_M \, : \, f \in \mathcal{F}\}$ satisfies conditions~(i) and (ii) of Theorem \ref{main_thm}. Regarding the first condition we note that since $M> 1$ then \[\min(|f_M|,1) = \min(|f|,1) \] being clear that $\mathcal{F}_M$ satisfies condition~(i). Regarding condition~(ii) we note that the truncation map $T_M$ is Lipschitz continuous with constant $1$, which implies that 
\[|f_M(x+y) - f_M(x)| = |T_M(f(x+y)) - T_M(f(x))| \leq |f(x+y) - f(x)|. \]
Therefore
\[\min(|f_M(x+y) - f_M(x)|,1) \leq \min(|f(x+y) - f(x)|,1) \]
and so
\[ \int_{\R^n} \min(|f_M(x+y) - f_M(x)|,1)^p \, \dx \leq \int_{\R^n} \min(|f(x+y) - f(x)|,1)^p \, \dx.\]
\par 
The next step is to prove that $\mathcal{F}_M$ is totally bounded in $L^p(\R^n)$ (with respect to $\|\idot \|_p$). From condition~(i) of Theorem~\ref{main_thm} we have  
\[\int_{|x| > R} \min(|f_M|,1)^p \, \dx < \left( \frac{\eps}{M} \right)^p \]
for some $R > 0$ and all $f_M \in \mathcal{F}_M$. Then 
\[\int_{|x| > R} |f_M|^p \, \dx \leq M^p \int_{|x| > R} \min(|f_M|,1)^p \, \dx < \eps^p \]
and thus $\mathcal{F}_M$ satisfies condition~(i) of Theorem~\ref{thm_KR}. \\
Moreover, given $\eps>0$, choose $r > 0$ from condition~(ii) of Theorem~\ref{main_thm} so that if $|y| < r$ then 
\[\int_{\R^n} \min(|f_M(x+y) - f_M(x)|,1)^p \, \dx < \frac{\eps^p}{1+(2M)^p}. \]
We have
\begin{align*}
\int_{\R^n} |f_M(x+y) - f_M(x)|^p \, \dx \leq & \ \int_{|\tau_y f_M - f_M| \leq 1} |f_M(x+y) - f_M(x)|^p \, \dx \\
& + (2M)^p \int_{|\tau_y f_M - f_M| > 1} 1 \, \dx \\
\leq & \ (1+(2M)^p) \int_{\R^n} \min(|f_M(x+y) - f_M(x)|,1)^p \, \dx \\
< & \ \eps^p.
\end{align*}
It follows by Theorem \ref{thm_KR} that $\mathcal{F}_M$ is totally bounded in $L^p(\R^n)$. Let $h_1, \ldots, h_m \in L^p(\R^n)$ be such that 
\[ \mathcal{F}_M \subseteq \bigcup_{i=1}^m B_p(h_i,\eta/2). \]
Then, given $f \in \mathcal{F}$, there exists $i \in \{ 1, \ldots, m\}$ so that 
\begin{align*}
\|f - h_i \|_{\ap} & \leq \|f - f_M \|_{\ap} + \|f_M - h_i \|_{\ap} \\
& \leq \eta/2 + \|f_M - h_i \|_p \\
& < \eta
\end{align*}
which completes the proof.

\section{Examples} \label{section_examples}
The first three examples of this section concern sequences of functions that satisfy only two of the conditions of Theorem~\ref{main_thm} but violate a third. This shows that Theorem~\ref{main_thm} is sharp in the sense that none of the conditions is redundant.
\begin{example}
Let $f_k(x) = k^{1/p} \chi_{[0,1]}(x)$. The sequence $\{f_k \}_{k\in \N}$ satisfies conditions~(i) and (ii) but violates condition~(iii). \par 
The first condition is clear since for $R > 1$ we have
\[\int_{|x| > R} \min(|f_k|,1)^p \, \dx = 0 \]
for all $k \in \N$. \par 
Regarding the second condition, given $\eps>0$ we choose $r = \min\{1,\eps^p/2\}$ so that for $|y| < r$ and $k\in \N$ it holds
\[\int_\R \min(|f_k(x+y) - f_k(x)|,1)^p \, \dx = 2|y| < 2r \leq \eps^p. \]
Now, we note that given $M>0$, for $k > M^p$ we have 
\[\big| \{|f_k| > M \} \big| = 1\]
and hence the third condition is not satisfied. 
\par Therefore, this sequence is neither totally bounded in $\Lambda^p(\R)$ nor in $L^p(\R)$ (in fact, it is not even bounded in $L^p(\R)$).
\end{example}

\begin{example}
Let $g_k(x)= \chi_{[k,k+1]}(x)$. The sequence $\{g_k \}_{k\in \N}$ satisfies conditions~(ii) and~(iii) but violates condition~(i). \par 
We start with condition~(iii). Simply note that $|g_k| \leq 1$ and hence for $M > 1$ we have 
\[\big| \{|g_k| > M \} \big| = 0\]
for all $k \in \N$. \par 
Condition~(ii) is analogous to the previous example: given $\eps > 0$, take $r = \min\{1,\eps^p/2\}$ so that for $|y| < r$ and all $k \in \N$,
\[\int_\R \min(|g_k(x+y) - g_k(x)|,1)^p \, \dx = 2|y| < 2r \leq \eps^p. \] \par 
Regarding the failure of condition~(i), we observe that given $R > 0$, choosing $k \geq R$ yields
\[\int_{|x| > R} \min(|g_k|,1)^p \, \dx = 1. \] \par 
It follows that $\{g_k \}_{k\in \N}$ is not totally bounded in $\Lambda^p(\R)$ (nor in $L^p(\R)$).
\end{example}
\begin{example}
Let $h_k(x) = r_k(x) \chi_{[0,1]}(x)$ where $r_k$ is the $k^{\text{th}}$ Rademacher function on $[0,1]$ (and extended by zero elsewhere), that is, $r_k(x) = \operatorname{sign}(\sin(2^k \pi x))$. The sequence $\{h_k \}_{k\in \N}$ satisfies conditions~(i) and (iii) but violates condition~(ii). \par 
The first and third conditions are clear (take $R > 1$ for (i) and $M > 1$ for (iii)), so we focus on the failure of condition~(ii). Let $r > 0$ and choose $k \in \N$ so that $y = 2^{-k} < r$. Note that for $x \in [0,1-2^{-k}]$ we have $r_k(x+y) = - r_k(x)$. Thus
\begin{align*}
\int_{\R} \min(|h_k(x+y) - h_k(x)|,1)^p \, \dx & \geq\int_0^{1-2^{-k}} \min(|2r_k(x)|,1)^p \, \dx \\
& = 1-2^{-k} \\
& \geq \frac{1}{2}
\end{align*}
which proves that $\{h_k \}_{k\in \N}$ does not satisfy condition~(ii).
\end{example}

The last two examples concern sequences that are totally bounded in~$\Lambda^p(\R^n)$ but not in~$L^p(\R^n)$.

\begin{example}
Let $1 \leq p < \infty$ and consider for each $k \in \N$ the function $u_k(x) = k^{1/p} \chi_{[k,k+1/k]}(x).$ The sequence $\{u_k\}_{k\in \N}$ asymptotically $L_p$-converges to the zero function, and hence it is totally bounded in~$\Lambda^p(\R)$. However, it is not totally bounded in $L^p(\R)$. Indeed, we have
\begin{align*}
\int_\R \min(|u_k(x)|,1)^p \, \dx & = \int_{k}^{k+1/k} \min(k^{1/p},1)^p \, \dx  \leq \frac{1}{k}  \to 0 \quad \text{as} \ k \to \infty,
\end{align*}
which proves that $\{u_k\}_{k\in \N}$ $\ap$-converges to~$0$, but given $R > 0$, for $k \geq R$ it holds
\begin{align*}
\int_{|x| > R} |u_k(x)|^p \, \dx \geq \int_R^\infty k \chi_{[k,k+1/k]}(x) \, \dx = 1,
\end{align*}
therefore $\{u_k\}_{k\in \N}$ does not satisfy the first condition of Theorem~\ref{thm_KR}, and hence it is not totally bounded in~$L^p(\R)$.  
\end{example}

\begin{example}
Let $1 < p < \infty$ and consider for each $k \in \N$, $v_k(x) = x^{-1} \chi_{[1/k, \infty)}$. Then, $\{v_k\}_{k \in \N}$ is not bounded in $L^p(\R)$, and hence it is not totally bounded in $L^p(\R)$, yet it is totally bounded in $\Lambda^p(\R)$ since it $\ap$-converges to $v \in \Lambda^p(\R)$ given by $v(x) = x^{-1} \chi_{(0,\infty)}(x).$
\end{example}

\section*{Acknowledgments}
This research was supported by the Austrian Science Fund (FWF), project 10.55776/F65.

\end{document}